\theoremstyle{plain}
\newtheorem{Theorem}{\indent Theorem}
\newtheorem{Lemma}{\indent Lemma}
\newtheorem{Proposition}{\indent Proposition}
\theoremstyle{remark}
\theoremstyle{plain}
\newtheorem*{Theorem*}{\indent Theorem}
\newtheorem*{Lemma*}{\indent Lemma}
\newtheorem*{Corollary*}{\indent Corollary}
\newtheorem*{Hypothesis*}{\indent Hypothesis}
\newtheorem*{Proposition*}{\indent Proposition}
\theoremstyle{definition}
\newtheorem*{Remark*}{\indent Remark}
\newtheorem*{Question*}{\indent Question}
\newtheorem*{Definition*}{\indent Definition}
\newtheorem*{Notation*}{\indent Notation}
\newenvironment{Proof} {\indent\textbf{Proof. }}{\hfill$\Box$}
\begin{document}
\begin{center}
{\bf\Large On recovering non-local perturbation of non-selfadjoint Sturm--Liouville operator}
\end{center}

\begin{center}
{\bf\large Maria Kuznetsova\footnote{Department of Mathematics, Saratov State University, e-mail: {\it kuznetsovama@info.sgu.ru}}}
\end{center}

\noindent{\bf Abstract.} 
Recently, there appeared a significant interest in inverse spectral problems for non-local operators arising in numerous applications. In the present work, we consider the operator with frozen argument $ly = -y''(x) + p(x)y(x) + q(x)y(a),$ which is a non-local perturbation of the non-selfadjoint Sturm--Liouville operator. We study the inverse problem of recovering the potential $q\in L_2(0, \pi)$ by the spectrum when the coefficient $p\in L_2(0, \pi)$ is known.  While the previous works were focused only on the case $p=0,$ here we investigate the more difficult non-selfadjoint case, which requires consideration of eigenvalues multiplicities. We develop an approach based on the relation between the characteristic function and the coefficients $\{ \xi_n\}_{n \ge 1}$ of the potential $q$ by a certain basis. We obtain necessary and sufficient conditions on the spectrum being asymptotic formulae of a special form. They yield that a part of the spectrum does not depend on $q,$ i.e. it is uninformative.
For the unique solvability of the inverse problem, one should supplement the spectrum with a part of the coefficients $ \xi_n,$ being the minimal additional data. For the inverse problem by the spectrum and the additional data, we obtain a uniqueness theorem and an algorithm.

\medskip
\noindent {\it Key words}: inverse spectral problems, frozen argument, Sturm--Liouville operators, non-local operators, necessary and sufficient conditions

\medskip
\noindent {\it 2010 Mathematics Subject Classification}: 34K29, 34A55
\\

\section*{Introduction}
Inverse spectral problems consist in recovering operators from their spectral characteristics. The classical results in this field were obtained for the  differential operators~\cite{KuM_B,KuM_yurko,KuM_march,KuM_levitan,KuM_high-order}, which are local.
Recently, in connection with numerous applications, there appeared a considerable interest in inverse problems for non-local operators~\cite{KuM_djuric, KuM_pikula2, KuM_yur-con,  KuM_uniform,KuM_results2007,KuM_bolletin, KuM_nonlocal mc}, and, in particular, for the operators with frozen argument~\cite{KuM_albeverio, KuM_nizh10,KuM_BBV, KuM_BV, KuM_BK, KuM_wang, KuM_but-hu,
KuM_tsai, KuM_my-froz, KuM_bon-disc,KuM_my-uniform,KuM_DobHry,KuM_AML}. 
Their studying is complicated by the fact that non-local operators require development of non-standard methods.

In this paper, we study the recovery of a complex-valued potential $q \in L_2(0, \pi)$ by the spectrum $\{ \lambda_n \}_{n \ge 1}$ of the boundary value problem
\begin{equation} \label{KuM_frozen S-L}
ly := -y''(x) + p(x) y(x) + y(a) q(x) = \lambda y(x), \quad x \in (0, \pi),
\end{equation}
\begin{equation} \label{KuM_bc}
y^{(\alpha)}(0) = y^{(\beta)}(\pi) = 0, \quad \alpha, \beta \in \{ 0, 1\},
\end{equation}
where $a \in [0, \pi]$ and  $p \in L_2(0, \pi)$ is complex-valued. Operators $l y$
are usually called Sturm--Liouville operators with {\it frozen argument}. 
They have a close relation to the operators with integral boundary conditions~\cite{KuM_lomov,KuM_lomov2,KuM_kraal, KuM_bolletin, KuM_nonlocal mc}, which arise in studying diffusion and
heating processes and in the theory of elasticity, see~\cite{KuM_gord, KuM_feller, KuM_feller2, KuM_katarzyna}. 
In connection with this, in~\cite{KuM_pol}, some spectral properties of the operator $ly$ were established in the case of periodic boundary conditions. However, the mentioned work does not address an inverse spectral problem.

The previous studies of inverse spectral problems for Sturm--Liouville operators with frozen argument~\cite{KuM_albeverio, KuM_nizh10,KuM_BBV, KuM_BV, KuM_BK, KuM_wang, KuM_but-hu,
KuM_tsai, KuM_my-froz, KuM_bon-disc,KuM_my-uniform,KuM_DobHry,KuM_AML} were focused only on the case $p=0.$ 
In this particular case, a comprehensive study of recovering $q$ by the spectrum required a series of the works~\cite{KuM_BBV, KuM_BV, KuM_BK, KuM_wang,KuM_tsai, KuM_my-uniform, KuM_AML}.
The most general approach to the operator with frozen argument was developed in~\cite{KuM_AML}, which allowed us to obtain necessary and sufficient conditions on the spectrum and, afterwards, a uniform stability of the inverse problem~\cite{KuM_my-uniform}.

 In~\cite{KuM_DobHry}, there was suggested another approach to the operator with frozen argument within the framework of perturbation theory.
 According to it, operator $l y$ could be treated as a one-dimensional perturbation of the Sturm--Liouville operator $Ay := -y''+p(x) y.$
However, for studying the spectral properties of $l y,$ this approach needs the selfadjointness of the operator $A.$ By this reason, it  is inapplicable to the case of complex-valued $p$ considered here, since the unperturbed operator $A$ is non-selfadjoint.

Here, we develop an approach to the general situation of arbitrary $p \in L_2(0, \pi),$ relying on some ideas of~\cite{KuM_AML}. 
We significantly extend the mentioned ideas to take into account the non-selfadjointness of the unperturbed operator, which requires consideration of eigenvalues multiplicities.

Main results of the paper consist in necessary and sufficient conditions on the spectrum, a uniqueness theorem and an algorithm.
As in the previous work~\cite{KuM_AML}, the necessary and sufficient conditions are asymptotic formulae of a special form. They give the so-called degeneration condition that some part of the eigenvalues is uninformative, i.e. it does not depend on $q.$ 
 However, compared to~\cite{KuM_AML}, the non-selfadjointness of the unperturbed operator leads to a new effect that the degeneration condition includes a restriction on the minimal possible multiplicity of each uninformative eigenvalue.

The paper is organized as follows. In Section~1, we introduce necessary objects and provide auxiliary statements. 
As well we obtain a characteristic function of boundary value problem~\eqref{KuM_frozen S-L},~\eqref{KuM_bc} and a main equation of the inverse problem. 
In Section~2, by necessity, we establish the conditions on the spectrum. The main results and their proves are given in Section~3. 

\section{Preliminaries. Main equation of the inverse problem}
%\vspace*{5mm}

First, we consider the unperturbed boundary value problem for the classical Sturm--Liouville equation 
\begin{equation} \label{KuM_S-L}
-y''(x) + p(x) y(x) = \lambda y(x), \quad x \in (0, \pi),
\end{equation}
with boundary conditions~\eqref{KuM_bc}.
Denote by $S_a(x, \lambda)$ and $C_a(x, \lambda)$ the solutions of equation~\eqref{KuM_S-L} under the initial conditions 
$$C_a(a, \lambda) = 1, \; C'_a(a, \lambda) =0, \quad S_a(a, \lambda) = 0, \; S'_a(a, \lambda) =1;$$ 
here and below the prime symbol stands for the derivative with respect to the { \it first} argument.
Let us agree that $\int_a^b f(t) \, dt$ is understood as $-\int_b^a f(t) \, dt$ when $b < a.$
For $x \in [0, \pi],$ using transformation operators (see, e.g.~\cite{KuM_yurko}), we obtain the following representations:
\begin{equation} \label{KuM_transformation}
\left.\begin{array}{c}
\displaystyle S_a(x, \lambda) = \frac{\sin \rho(x-a)}{\rho} - \frac{\cos \rho(x-a)}{\rho^2} \omega_a(x) + \int_a^x K_{a,1}(x, t) \frac{\cos \rho(t-a)}{\rho^2} \, dt, \\[4mm]
\displaystyle S'_a(x, \lambda) = \cos \rho(x-a) + \frac{\sin \rho(x-a)}{\rho} \omega_a(x) + \int_a^x K_{a,2}(x, t) \frac{\sin \rho(t-a)}{\rho} \, dt, \\[4mm]
\displaystyle C_a(x, \lambda) = \cos \rho(x-a) + \frac{\sin \rho(x-a)}{\rho} \omega_a(x) + \int_a^x K_{a,3}(x, t) \frac{\sin \rho(t-a)}{\rho} \, dt, \\[4mm]
\displaystyle C'_a(x, \lambda) = -\rho \sin \rho(x-a) + \cos \rho(x-a) \omega_a(x) + \int_a^x K_{a,4}(x, t) \cos \rho(t-a) \, dt,
\end{array}
\right\}
\end{equation}
where $\rho^2=\lambda$ and $\omega_a(x) = \frac12 \int_a^x p(t)\,dt.$ 
In~\eqref{KuM_transformation}, for each fixed $x,$ there determined $K_{a,j}(x,  \cdot) \in L_2(0, \pi),$  and $K_{a,j} \in L_2[0, \pi]^2,$ $j=\overline{1, 4}.$

Consider the entire function $\Delta_0(\lambda) = C_a^{(\alpha)}(0,\lambda)S_a^{(\beta)}(\pi, \lambda) - S_a^{(\alpha)}(0,\lambda)C_a^{(\beta)}(\pi, \lambda),$ wherein $y^{(0)} := y$ and $y^{(1)} := y'.$ 
It is easy to see that $\Delta_0$ is a characteristic function of unperturbed boundary value problem~\eqref{KuM_bc},~\eqref{KuM_S-L}, i.e. a number $\mu_n$ is its eigenvalue if and only if $\Delta_0(\mu_n) =0.$
By $\{ \mu_n \}_{n \ge 1}$ we denote the spectrum of~\eqref{KuM_bc},~\eqref{KuM_S-L},
 being the sequence of the eigenvalues taken with the account of algebraic multiplicities. The following asymptotics are known (see~\cite{KuM_yurko}):
\begin{equation} \label{KuM_mu}
\mu_n = \theta_n^2, \quad \theta_n =n -\frac{\alpha+\beta}{2} + \frac{\omega}{\pi n} +\frac{\kappa_n}{n}, \; n \ge 1, \quad \omega :=\frac{1}{2} \int_0^\pi p(t) \, dt, \quad \{ \kappa_n \}_{n\ge1} \in \ell_2.
\end{equation}

By $m_n$ we denote multiplicity of the eigenvalue $\mu_n.$ 
By asymptotics~\eqref{KuM_mu}, for a sufficiently large $n,$ we have $m_n=1.$ 
Without loss of generality, we assume that equal values in the spectrum follow each other.  
Then,  we have
$$\mu_n = \mu_{n+1} = \ldots = \mu_{n+m_n -1}, \quad n \in \mathcal S, \quad {\mathcal S} := \{ n \ge 2 \colon \mu_n \ne \mu_{n-1}\}\cup\{1\}.$$
The index $n \in \mathcal S$ corresponds to the unique elements in $\{ \mu_n \}_{n \ge 1},$ while for $n \in \mathcal S$ and $\nu \in \overline{0, m_n - 1},$ the index $k = n+\nu$ runs through $\mathbb N.$ 
%%%%%%%%%%%%%%%%%%%%%%%%%%%%%%%%%%%%

Now, we are ready to study boundary value problem~\eqref{KuM_frozen S-L},~\eqref{KuM_bc}.
Introduce the solutions $S(x, \lambda)$ and $C(x, \lambda)$ of equation~\eqref{KuM_frozen S-L} under the initial conditions
\begin{equation*} \label{KuM_initial conditions}
S(a, \lambda) =  0, \; S'(a, \lambda) =1, \quad
 C(a, \lambda) = 1, \; C'(a, \lambda) = 0.\end{equation*}
Any other solution of~\eqref{KuM_frozen S-L} is a linear combination of $S(x, \lambda)$ and $C(x, \lambda).$
It is easy to see that 
\begin{equation} \label{KuM_SC}
S(x, \lambda) = S_a(x, \lambda),  \quad  C(x, \lambda) = C_a(x,\lambda) + \int_a^x W(x, t, \lambda) q(t) \, dt,\end{equation}
where $W(x, t, \lambda) := C_a(t, \lambda) S_a(x, \lambda) - C_a(x, \lambda) S_a(t, \lambda).$ 
We introduce the entire function
\begin{equation} \label{KuM_Delta def}
\Delta(\lambda) = C^{(\alpha)}(0, \lambda) S^{(\beta)}(\pi, \lambda) - S^{(\alpha)}(0, \lambda) C^{(\beta)}(\pi, \lambda).\end{equation}
Then, $\Delta(\lambda)$ is a characteristic function of boundary value problem~\eqref{KuM_frozen S-L},~\eqref{KuM_bc}, while $\{ \lambda_n \}_{n \ge 1}$ is a sequence of its zeroes taken with the account of multiplicities.

Substituting~\eqref{KuM_SC} into~\eqref{KuM_Delta def}, we obtain
\begin{equation} \label{KuM_Delta rep}
\Delta(\lambda) = \Delta_0(\lambda) - S_a^{(\beta)}(\pi, \lambda) \int_0^a W^{(\alpha)}(0, t, \lambda) \, q(t)\,dt
-S_a^{(\alpha)}(0, \lambda) \int_a^\pi W^{(\beta)}(\pi, t, \lambda) \,q(t)\,dt.
\end{equation}
Following the approach in~\cite{KuM_AML}, we should substitute into~\eqref{KuM_Delta rep} the values $\lambda=\mu_n,$ being the zeroes of the main part $\Delta_0(\lambda).$
 In the paper~\cite{KuM_AML} corresponding to the case $p=0,$ we had $\mu_n = \big(n - \frac{\alpha+\beta}{2}\big)^2,$ being simple eigenvalues. Here, we have to take into account that $\mu_n$ may be multiple. 
For each $n \in \mathcal S,$ we differentiate the both parts of formula~\eqref{KuM_Delta rep} $\nu = \overline{0, m_n -1}$ times and put $\lambda=\mu_n.$ Since 
\begin{equation} \label{KuM_det rel}
\frac{\partial^\nu}{\partial \lambda^\nu}\big[C^{(\alpha)}_a(0, \lambda)S^{(\beta)}_a(\pi, \lambda)\big] = \frac{\partial^\nu}{\partial \lambda^\nu}[S^{(\alpha)}_a(0, \lambda)C^{(\beta)}_a(\pi, \lambda)],  \quad   n \in {\cal S}, \;\nu =\overline{0, m_n-1},
\end{equation}
in $\lambda = \mu_n,$ we obtain
\begin{equation} \label{KuM_aux}
\Delta^{(\nu)}(\lambda) = \left[S_a^{(\beta)}(\pi, \lambda) \int_0^\pi g(t, \lambda) q(t)\, dt\right]^{(\nu)}, \quad \lambda = \mu_n, \; n \in {\cal S}, \; \nu =\overline{0, m_n-1},
\end{equation}
where $g(t, \lambda) := -W^{(\alpha)}(0,t,\lambda).$ One can see that $g(t, \lambda) = S_0(x, \lambda)$ if $\alpha = 0,$ and 
 $g(t, \lambda) = -C_0(x, \lambda)$ if $\alpha = 1.$ Moreover, $g^{(\beta)}(\pi, \lambda) = \Delta_0(\lambda).$

For $n \in {\cal S}$ and $\nu =\overline{0, m_n-1},$ we introduce
$$a_{n+\nu} = \frac{n^{1-\beta}}{\nu!}\left. \frac{\partial^\nu S_a^{(\beta)}(\pi, \lambda)}{\partial\lambda^\nu}\right|_{\lambda=\mu_n},\quad g_{n+\nu}(t) = \frac{n^{1-\alpha}}{(m_n-\nu-1)!} \left.\frac{\partial^{m_n-\nu-1} g(t, \lambda)}{\partial\lambda^{m_n-\nu-1}} \right|_{\lambda=\mu_n}. $$
Note that $a_n$ and $g_n$ are the objects constructed by unperturbed boundary value problem~\eqref{KuM_bc},~\eqref{KuM_S-L}, and they are known.
Formulae~\eqref{KuM_transformation} and~\eqref{KuM_mu} yield that
$a_n=O(1)$ and $\{ g_n(t) \}_{n \ge 1}$ is an almost normalized system.
At the same time, $\{ g_n(t) \}_{n \ge 1}$ is constructed from eigen- and associated functions of the operator $Ay = -y'' + p(x) y$ considered under strongly regular conditions~\eqref{KuM_bc}. By the mentioned reasons, the following proposition holds (see, e.g.,~\cite{KuM_shka, KuM_nai}).

\begin{Proposition} \label{KuM_prop}
The functional sequence $\{g_n(t)\}_{n \ge 1}$ is a Riesz basis in $L_2(0, \pi).$
\end{Proposition}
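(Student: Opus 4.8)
The plan is to recognize $\{g_n(t)\}_{n\ge1}$ as a normalization of the system of eigen- and associated functions of the operator $Ay=-y''+p(x)y$ subject to~\eqref{KuM_bc}, and then to invoke the classical theorem that the root functions of an ordinary differential operator with strongly regular boundary conditions form a Riesz basis in $L_2$ (see~\cite{KuM_shka, KuM_nai}). First I would establish the identification. For every $\lambda$, the function $g(\cdot,\lambda)$ equals $S_0(\cdot,\lambda)$ when $\alpha=0$ and $-C_0(\cdot,\lambda)$ when $\alpha=1$, hence it solves equation~\eqref{KuM_S-L} and satisfies the left-hand boundary condition $y^{(\alpha)}(0)=0$ identically in $\lambda$. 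Since $g^{(\beta)}(\pi,\lambda)=\Delta_0(\lambda)$ and $\mu_n$ is a zero of $\Delta_0$ of multiplicity $m_n$, the functions $g(\cdot,\mu_n),\partial_\lambda g(\cdot,\mu_n),\dots,\partial_\lambda^{m_n-1}g(\cdot,\mu_n)$ all satisfy the right-hand condition $y^{(\beta)}(\pi)=0$ as well, so each of them meets both boundary conditions.

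Next I would check that these $\lambda$-derivatives are precisely the associated functions at $\mu_n$. Differentiating the relation $A g(\cdot,\lambda)=\lambda g(\cdot,\lambda)$ in $\lambda$ gives $(A-\lambda)\partial_\lambda^k g=k\,\partial_\lambda^{k-1}g$, which at $\lambda=\mu_n$ is exactly the chain relation defining a Jordan chain of root functions of $A$. Because the conditions~\eqref{KuM_bc} are separated, each eigenvalue is geometrically simple, so this single chain of length $m_n$ exhausts the root subspace at $\mu_n$. Thus, for each $n\in\mathcal S$, the block $g_n,g_{n+1},\dots,g_{n+m_n-1}$ spans that root subspace (the reindexing through $m_n-\nu-1$ merely reverses the order within the chain), and the prefactors $n^{1-\alpha}/(m_n-\nu-1)!$ are chosen so that the resulting system is almost normalized, as already noted before the statement. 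By~\eqref{KuM_mu}, only finitely many $\mu_n$ are multiple, so associated functions occur in finitely many chains and the bulk of the system consists of genuine eigenfunctions.

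Finally I would invoke the basis theorem. The separated conditions~\eqref{KuM_bc} are strongly regular for the second-order operator $A$, so by~\cite{KuM_shka, KuM_nai} the normalized system of eigen- and associated functions of $A$ is complete and forms a Riesz basis in $L_2(0,\pi)$; this is where the quantitative completeness and the two-sided square-summable estimates are supplied. Since the Riesz basis property is preserved under multiplication of the basis elements by factors bounded above and below away from zero, and $\{g_n\}$ differs from the normalized root functions by exactly such factors, it is a Riesz basis as well. The main obstacle is not analytic but organizational: one must correctly match the $\lambda$-differentiation indexing in the definition of $g_{n+\nu}$ to the Jordan chains of $A$ and confirm strong regularity, after which the heavy part, namely the actual Riesz basis estimates, is entirely delegated to the cited classical results.
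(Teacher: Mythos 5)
Your proposal is correct and takes essentially the same approach as the paper: the paper likewise notes that $\{g_n\}_{n\ge1}$ is an almost normalized system built from the eigen- and associated functions of $Ay=-y''+p(x)y$ under the strongly regular separated conditions~\eqref{KuM_bc}, and then invokes the classical Riesz-basis theorems cited in~\cite{KuM_shka, KuM_nai}. Your write-up simply makes explicit the Jordan-chain identification via $\lambda$-differentiation and the fact that only finitely many eigenvalues are multiple, details the paper leaves implicit.
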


Applying the general Leibniz rule in~\eqref{KuM_aux}, we get
\begin{equation} \label{KuM_main equation}
n^{2 - \alpha-\beta}\frac{\Delta^{(\nu)}(\mu_n)}{\nu!} = \sum_{\eta=0}^\nu a_{n+\nu-\eta} \xi_{n+m_n-1-\eta}, \quad n \in {\cal S}, \; \nu =\overline{0, m_n-1},
\end{equation}
where $\xi_{k} := \int_0^\pi g_{k}(t) q(t) \, dt.$ 
Further, we consider~\eqref{KuM_main equation} as an equation with respect to the coefficients $\{ \xi_n\}_{n \ge 1}.$ From Proposition~\ref{KuM_prop} it follows that $\{ \xi_n\}_{n\ge 1} \in \ell_2$ and its knowledge allows one to recover $q$ uniquely. Formula~\eqref{KuM_main equation} is called {\it main equation} of the inverse problem.

%%%%%%%%%%%%%%%%%%%%%%%%%%%%%%%%
\section{Necessary conditions}

In this section, we obtain necessary conditions on the spectrum $\{ \lambda_n \}_{n\ge 1},$ which consist in asymptotic formulae~\eqref{KuM_main asym}. In the next section, we prove that they are the sufficient conditions as well.

Remind that $\lambda_n=\rho_n^2$ and $\mu_n = \theta_n^2.$ Without loss of generality, we assume that $\arg\rho_n, \, \arg \theta_n \in [-\frac\pi2, \frac\pi2).$ We start by obtaining the weakest asymptotics for $\rho_n.$

\begin{Lemma} \label{KuM_L1}The following asymptotics hold:
$\rho_n = \theta_n + o(1),$ $n \ge 1.$
\end{Lemma}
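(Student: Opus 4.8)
The plan is to read \eqref{KuM_Delta rep} as a perturbation of $\Delta_0$ and to capture each zero $\rho_n$ of $\Delta$ inside a shrinking disc around the corresponding $\theta_n$ by Rouch\'e's theorem. First I would record, straight from the transformation-operator formulas \eqref{KuM_transformation}, the standard growth bounds valid in the whole $\rho$-plane, namely $|S_a^{(\beta)}(\pi,\lambda)|\le C|\rho|^{\beta-1}e^{(\pi-a)|\operatorname{Im}\rho|}$, $|S_a^{(\alpha)}(0,\lambda)|\le C|\rho|^{\alpha-1}e^{a|\operatorname{Im}\rho|}$, together with the analogous bounds for the $C$-type solutions where the power of $|\rho|$ is one higher. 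Combined with the full asymptotics of $\Delta_0$ (of leading term $\tfrac{\sin\rho\pi}{\rho}$, $\cos\rho\pi$, $\ldots$ according to $\alpha,\beta$), these yield the classical lower bound $|\Delta_0(\lambda)|\ge C_\delta|\rho|^{\alpha+\beta-1}e^{\pi|\operatorname{Im}\rho|}$ for $|\rho|$ large on the set where $\rho$ stays at distance $\ge\delta$ from every $\theta_n$; this is the order against which the perturbation must be compared. In the non-selfadjoint (complex $p$) setting this off-zeros bound has to be extracted from the transformation-operator asymptotics directly, not from any selfadjoint spectral theory.

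The decisive step is to show that the two integral terms in \eqref{KuM_Delta rep} are of order $|\rho|^{\alpha+\beta-2}e^{\pi|\operatorname{Im}\rho|}$, that is, one full power of $\rho$ smaller than $\Delta_0$ while keeping exactly the same exponential type $\pi$. The clean way to handle the exponential bookkeeping is to observe that $W(x,t,\lambda)=C_a(t,\lambda)S_a(x,\lambda)-C_a(x,\lambda)S_a(t,\lambda)$ is, as a function of $x$, the sine-type solution of \eqref{KuM_S-L} based at the point $t$: it satisfies $W(t,t,\lambda)=0$ and $\partial_x W(t,t,\lambda)=1$ (the Wronskian of $C_a,S_a$ equals $1$). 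Consequently $W^{(\beta)}(\pi,t,\lambda)$ grows only like $|\rho|^{\beta-1}e^{(\pi-t)|\operatorname{Im}\rho|}$, the exponent being the arc length $\pi-t$, rather than a sum of arc lengths that would overshoot $\pi$. Since $g(t,\lambda)=-W^{(\alpha)}(0,t,\lambda)$ coincides with $\pm S_0$ or $\pm C_0$ and hence obeys $|g(t,\lambda)|\le C|\rho|^{\alpha-1}e^{t|\operatorname{Im}\rho|}$, a crude Cauchy--Schwarz estimate using $q\in L_2$ gives $\bigl|\int_0^a g\,q\,dt\bigr|\le C|\rho|^{\alpha-1}e^{a|\operatorname{Im}\rho|}$ and $\bigl|\int_a^\pi W^{(\beta)}(\pi,\cdot,\lambda)\,q\,dt\bigr|\le C|\rho|^{\beta-1}e^{(\pi-a)|\operatorname{Im}\rho|}$. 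Multiplying by the boundary factors $S_a^{(\beta)}(\pi,\lambda)$ and $S_a^{(\alpha)}(0,\lambda)$ produces exactly $O\bigl(|\rho|^{\alpha+\beta-2}e^{\pi|\operatorname{Im}\rho|}\bigr)$ for each term, whence $|\Delta(\lambda)-\Delta_0(\lambda)|=o\bigl(|\Delta_0(\lambda)|\bigr)$ uniformly on the off-zeros set.

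With these estimates I would finish by a two-level Rouch\'e argument. On the small circles $\gamma_n=\{\rho\colon|\rho-\theta_n|=\delta_n\}$ with $\delta_n\to0$ chosen so that $n\delta_n\to\infty$ (say $\delta_n=n^{-1/2}$), the behaviour near a simple zero of $\Delta_0$ gives $|\Delta_0|\ge c|\rho|^{\alpha+\beta-1}\delta_n$ on $\gamma_n$ (for $n$ large, $\mu_n$ is simple by \eqref{KuM_mu}), while $|\Delta-\Delta_0|\le C|\rho|^{\alpha+\beta-2}$ there; the ratio is $O\bigl(1/(n\delta_n)\bigr)\to0$, so $|\Delta-\Delta_0|<|\Delta_0|$ on $\gamma_n$ and $\Delta$ has exactly one zero inside. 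A companion count on the large contours $|\rho|=N-\tfrac{\alpha+\beta}{2}+\tfrac12$, which stay $\asymp\tfrac12$ away from all $\theta_n$ by \eqref{KuM_mu}, matches the enumerations of $\{\lambda_n\}$ and $\{\mu_n\}$, so the zero inside $\gamma_n$ is $\rho_n^2$ under the fixed branch $\arg\rho_n,\arg\theta_n\in[-\tfrac\pi2,\tfrac\pi2)$. This yields $|\rho_n-\theta_n|<\delta_n\to0$, i.e. $\rho_n=\theta_n+o(1)$; the finitely many small indices do not affect an $o(1)$ statement.

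I expect the main obstacle to be the exponential-type bookkeeping in the middle step: a naive splitting of $W^{(\beta)}(\pi,t,\lambda)$ through the base point $a$ produces an exponent $(\pi-a)+(t-a)$ that can exceed $\pi$ and would destroy the comparison off the real axis. Recognizing $W(\cdot,t,\lambda)=S_t(\cdot,\lambda)$, so that only the single arc length $\pi-t$ enters, is what pins the perturbation to type exactly $\pi$ and a gain of one power of $|\rho|$; everything else is routine growth estimation.
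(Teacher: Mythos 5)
Your proposal is correct and takes essentially the same route as the paper: both read \eqref{KuM_Delta rep} as a perturbation of $\Delta_0$, bound the two integral terms by $O\bigl(|\rho|^{\alpha+\beta-2}e^{\tau\pi}\bigr)$ using exactly the observation you highlight (that $W(\cdot,t,\lambda)$ is the solution based at a single point, so only one arc length enters the exponent --- the paper writes this as $W'(0,t,\lambda)=\mp C_0(t,\lambda)$ and $W(\pi,t,\lambda)=-S_\pi(t,\lambda)$), and then conclude by Rouch\'e's theorem with a large-contour count to match enumerations. The only difference is cosmetic: the paper compares $\Delta$ with the explicit leading term ($\cos\rho\pi$ in the case $\alpha=1,\ \beta=0$) on discs of fixed radius $\delta$ centered at $n-\frac{\alpha+\beta}{2}$ and lets $\delta\to0$, while you compare with $\Delta_0$ itself on shrinking discs around $\theta_n$, which costs you the extra (true, but unproved in your sketch) lower bound $|\Delta_0|\ge c|\rho|^{\alpha+\beta-1}\delta_n$ on $\gamma_n$ and buys the marginally stronger rate $\rho_n=\theta_n+O(n^{-1/2})$, which the paper anyway supersedes in Theorem~\ref{KuM_T1}.
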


\begin{Proof}
For definiteness, we provide computations in the case $\alpha=1$ and $\beta=0$ (the other cases are proceeded analogously).
By asymptotics~\eqref{KuM_mu}, it is sufficient to prove that $\rho_n = n -\frac12 + o(1).$ For $\lambda=\rho^2,$ we consider $\Delta(\lambda)$ as an entire function of $\rho.$ Then, $\{ \rho_n \}_{n \ge 1} \cup \{ -\rho_n \}_{n \ge 1}$ is the set of all its zeroes.
 
Denote $\tau = |\mathrm{Im}\, \rho|.$ Using the corresponding formulae in~\eqref{KuM_transformation}, for $\rho \to \infty,$ we obtain
\begin{equation*} W'(0, t, \lambda) = -C_0(t, \lambda)  = O\left( e^{\tau t}\right), \quad W(\pi, t, \lambda)  =-S_\pi(t, \lambda) =  O\left( \frac{e^{\tau (\pi-t)}}{\rho}\right), 
\label{KuM_WA}\end{equation*}
$$S_a(\pi, \lambda) = O\left(\frac{e^{\tau(\pi-a)}}{\rho}\right),\ S_a'(0, \lambda) = O\left( e^{\tau a}\right),\ \Delta_0(\lambda) = -C_0(\pi, \lambda) = -\cos \rho \pi + O\left( \frac{e^{\tau \pi}}{\rho}\right).$$
Substituting these relations into~\eqref{KuM_Delta rep}, we get
\begin{equation} \label{KuM_Delta asymp} 
\Delta(\lambda) 
=\Delta_0(\lambda) + O\left(\frac{e^{\tau \pi}}{\rho}\right)=\cos \rho \pi + O\left(\frac{e^{\tau \pi}}{\rho}\right).
\end{equation}
For any $\delta \in (0, \frac14),$ we have the following estimate with $ C_\delta>0$ (see~\cite{KuM_yurko}):
\begin{equation} \label{KuM_cos est}
|\cos \rho \pi| > C_\delta \, e^{\tau \pi}, \quad \rho \in G_\delta := \left\{ z \in \mathbb C \colon \left|z - n + \frac12\right|\ge\delta, \; n \in {\mathbb Z}\right\}.
\end{equation}
Then, by~\eqref{KuM_Delta asymp}, there exists $N_\delta \in \mathbb N$ such that 
$|\cos \rho \pi| \ge |\cos \rho \pi - \Delta(\rho^2)|$ as soon as $\rho \in G_\delta$ and $|\rho| \ge N_\delta.$
Applying Rouche's theorem, we arrive at that in the circle $|\rho| < N_\delta,$ the functions $\cos \rho \pi$ and $\Delta(\rho^2)$ have the same number of zeroes.
Analogously, in each circle $|\rho - n + \frac12|<\delta,$ where $n \in \mathbb Z$ is such that $|n - \frac12| > N_\delta,$ there is exactly one zero of $\Delta(\rho^2).$ Taking into account that $\Delta(\rho^2)$ and $\cos \rho \pi$ are even functions of $\rho,$ and that $\delta$ can be arbitrarily small, we arrive at the needed asymptotics.
\end{Proof}

Further, we clarify the obtained in Lemma~\ref{KuM_L1} necessary conditions on the spectrum.

\begin{Theorem} \label{KuM_T1} I. Let $K \in \mathbb N$ be such that for $n \ge K,$ all $m_n = 1.$  
Then, the following asymptotics hold:
\begin{equation} \label{KuM_eigenvalues asym}
\lambda_n =  \mu_n + a_n \varkappa_n, \; n \ge K, \quad \{ \varkappa_n \}_{n \ge K} \in \ell_2.
\end{equation}
II. For $n \in \mathcal S,$ denote $k_n = \min(m_n, r_n),$ where $r_n$ is  multiplicity of $\lambda=\mu_n$ as a zero of the entire function $S_a^{(\beta)}(\pi, \lambda).$
Then, there exists such numeration of $\{ \lambda_n \}_{n \ge 1}$ that
\begin{equation} \label{KuM_chain}
\lambda_n = \lambda_{n+1} = \ldots = \lambda_{n+k_n-1} = \mu_n, \quad n \in {\mathcal S}.
\end{equation}
\end{Theorem}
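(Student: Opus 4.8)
The plan is to prove the two parts by different mechanisms: Part~I is a quantitative perturbation argument (Rouch\'e plus a single Taylor step) that sharpens Lemma~\ref{KuM_L1} from $o(1)$ to the $\ell_2$-rate, while Part~II reads off the forced multiplicity of $\mu_n$ directly from the relations~\eqref{KuM_aux}. Throughout Part~I I would pass to the variable $\rho$ and write $F(\rho):=\Delta(\rho^2)=F_0(\rho)+\widetilde R(\rho)$, where $F_0(\rho):=\Delta_0(\rho^2)$ and $\widetilde R$ collects the two $q$-integrals of~\eqref{KuM_Delta rep}; here and below $\dot{}$ denotes $d/d\rho$. From the transformation-operator formulas~\eqref{KuM_transformation} together with~\eqref{KuM_mu} one checks that $F_0$ has leading order $\asymp\rho^{\alpha+\beta-1}$ (a trigonometric factor times that power), so on fixed disks $D_n=\{\,|\rho-\theta_n|<\delta\,\}$ (pairwise disjoint for large $n$, each carrying a single simple zero $\theta_n$ of $F_0$) one gets $\dot F_0(\theta_n)=d_n\,n^{\alpha+\beta-1}$ with $|d_n|\asymp1$, $\sup_{D_n}|\ddot F_0|=O(n^{\alpha+\beta-1})$, a lower bound $|F_0|\gtrsim n^{\alpha+\beta-1}$ on $\partial D_n$ (the analogue of~\eqref{KuM_cos est}), and --- the estimate on which the whole argument turns --- $\sup_{D_n}|\widetilde R|=O(n^{\alpha+\beta-2})$, one power of $\rho$ smaller than $F_0$. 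A Cauchy estimate then yields $\sup|\dot{\widetilde R}|=O(n^{\alpha+\beta-2})=o(n^{\alpha+\beta-1})$ on a slightly smaller disk, and Rouch\'e gives exactly one zero $\rho_n\in D_n$, reconfirming $\rho_n=\theta_n+o(1)$ and fixing the numeration.

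To finish Part~I I would expand $0=F(\rho_n)=F_0(\rho_n)+\widetilde R(\rho_n)$ to first order at $\theta_n$. Using $F_0(\theta_n)=0$ and the above orders, the bracket multiplying $(\rho_n-\theta_n)$ equals $\dot F_0(\theta_n)(1+o(1))=d_n\,n^{\alpha+\beta-1}(1+o(1))$, while $\widetilde R(\theta_n)=\Delta(\mu_n)$ is supplied \emph{exactly} by the main equation~\eqref{KuM_main equation} with $\nu=0$, namely $\Delta(\mu_n)=n^{\alpha+\beta-2}a_n\xi_n$. Hence $\rho_n-\theta_n=-a_n\xi_n/(d_n n)\,(1+o(1))$, and multiplying by $\rho_n+\theta_n=2\theta_n(1+o(1))$ gives $\lambda_n-\mu_n=-2a_n\xi_n/d_n\,(1+o(1))=:a_n\varkappa_n$. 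Since $|d_n|\asymp1$ and $\{\xi_n\}\in\ell_2$ by Proposition~\ref{KuM_prop}, the sequence $\{\varkappa_n\}_{n\ge K}$ lies in $\ell_2$, which is~\eqref{KuM_eigenvalues asym}. The case $a_n=0$ is automatic: then $\Delta(\mu_n)=0$ while $\mu_n$ is a simple zero of $\Delta$, so $\lambda_n=\mu_n$ and we set $\varkappa_n=0$. I expect the only genuine work to be the sup-bound $\sup_{D_n}|\widetilde R|=O(n^{\alpha+\beta-2})$ (any bound $o(n^{\alpha+\beta-1})$ would already suffice); everything after it is bookkeeping.

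For Part~II I would argue purely algebraically from~\eqref{KuM_aux}. Setting $h(\lambda):=\int_0^\pi g(t,\lambda)q(t)\,dt$, which is entire, relation~\eqref{KuM_aux} reads $\Delta^{(\nu)}(\mu_n)=\bigl[S_a^{(\beta)}(\pi,\cdot)\,h\bigr]^{(\nu)}(\mu_n)$ for $\nu=\overline{0,m_n-1}$. Since $\lambda=\mu_n$ is a zero of $S_a^{(\beta)}(\pi,\cdot)$ of order $r_n$, the product $S_a^{(\beta)}(\pi,\cdot)\,h$ vanishes at $\mu_n$ to order at least $r_n$, so its first $r_n$ derivatives vanish there. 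Intersecting the admissible range $\nu<m_n$ with $\nu<r_n$ shows $\Delta^{(\nu)}(\mu_n)=0$ for $\nu=\overline{0,k_n-1}$, $k_n=\min(m_n,r_n)$; that is, $\mu_n$ is a zero of $\Delta$ of multiplicity at least $k_n$.

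It then remains to arrange the numeration~\eqref{KuM_chain}. A Rouch\'e count on a disk about the point $\mu_n$ (the same localization as in Part~I, small enough to separate $\mu_n$ from the neighbouring distinct eigenvalues) shows that $\Delta$ has exactly $m_n$ zeros there, matching $\Delta_0$; the finitely many small indices are absorbed by comparing total zero counts of $\Delta$ and $\Delta_0$. Of these $m_n$ zeros, $k_n$ coincide with $\mu_n$ by the multiplicity bound, so placing the $k_n$ copies of $\mu_n$ at the head of the block $n,\dots,n+m_n-1$ and the remaining $m_n-k_n$ zeros afterwards gives a global numeration consistent with the block structure of $\{\mu_n\}_{n\ge1}$, which is~\eqref{KuM_chain}. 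The substantive content of Part~II is thus the multiplicity estimate, immediate from~\eqref{KuM_aux}, and the main obstacle is only the bookkeeping of the count.
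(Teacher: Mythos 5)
Your proposal is correct, and Part~II is essentially the paper's own argument: the vanishing of $S_a^{(\beta)}(\pi,\cdot)$ to order $r_n$ at $\mu_n$ forces $\Delta^{(\nu)}(\mu_n)=0$ for $\nu=\overline{0,k_n-1}$ (the paper phrases this as $a_n=\dots=a_{n+k_n-1}=0$ inserted into the main equation~\eqref{KuM_main equation}, you phrase it via~\eqref{KuM_aux} directly --- the same computation), after which both you and the paper dispose of the numeration by a counting remark. In Part~I the skeleton is also the same (localize $\rho_n$ near $\theta_n$, linearize, and draw the $\ell_2$ rate from $\{\xi_n\}\in\ell_2$ via Proposition~\ref{KuM_prop}), but your technical route differs in a worthwhile way: the paper evaluates the two $q$-integrals of~\eqref{KuM_Delta rep} at $\lambda=\rho_n^2$ and must identify their main term by explicit trigonometric expansions, the Riemann--Lebesgue lemma, and the determinant relation~\eqref{KuM_det rel}, finally solving $0=((-1)^n+o(1))\eta_n+S_a(\pi,\mu_n)\xi_n$; you instead evaluate the perturbation at the \emph{center} $\theta_n$, where the main equation with $\nu=0$ gives the exact identity $\widetilde R(\theta_n)=\Delta(\mu_n)=n^{\alpha+\beta-2}a_n\xi_n$, and control the displacement $\widetilde R(\rho_n)-\widetilde R(\theta_n)$ by a Cauchy estimate so that it is absorbed into the coefficient of $(\rho_n-\theta_n)$. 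This buys a cleaner argument (no Riemann--Lebesgue step, no re-derivation of what is already encoded in~\eqref{KuM_main equation}), at the price of having to verify the sup-bound $\sup_{D_n}|\widetilde R|=O(n^{\alpha+\beta-2})$ and $|\dot F_0(\theta_n)|\asymp n^{\alpha+\beta-1}$ in all four cases of $(\alpha,\beta)$ --- both of which do follow from~\eqref{KuM_transformation}, exactly as you anticipate. Two phrases deserve polishing rather than correction: when $a_n=0$ your parenthetical ``$\mu_n$ is a simple zero of $\Delta$'' is unjustified and unnecessary (what you actually use is that the unique zero of $\Delta(\rho^2)$ in $D_n$ must be $\theta_n$ because $\Delta(\mu_n)=0$), and in Part~II the local Rouch\'e claim of ``exactly $m_n$ zeros'' near $\mu_n$ is only valid for large $n$ (where $m_n=1$); for the finitely many multiple $\mu_n$ one should rely, as you then do, purely on the multiplicity bound plus the global zero count, which is also all the paper uses.
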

\begin{Proof}

I. By Lemma~\ref{KuM_L1}, we have $\rho_n = \theta_n + \eta_n,$ where $\eta_n = o(1).$ 
To prove~\eqref{KuM_eigenvalues asym}, we substitute $\lambda=\rho_n^2$ into~\eqref{KuM_Delta rep} and, using Taylor series, obtain asymptotics for $\eta_n,$ where $n \ge K.$ 

For definiteness, consider the case $\alpha=1$ and $\beta=0$ (the other cases are proceeded analogously).
 Since $\Delta_0(\lambda) = -C_0(\pi, \lambda),$ substituting $\lambda=\rho_n^2$ into the corresponding formula in~\eqref{KuM_transformation},  applying trigonometric formulae along with asymptotics~\eqref{KuM_mu} and $\eta_n = o(1),$ we get
$$
\Delta_0(\lambda_n) = -\cos \theta_n\pi - \frac{\omega}{\theta_n} \sin \theta_n\pi - \int_0^\pi K_{0,3}(\pi, t) \frac{\sin \theta_nt}{\theta_n} \, dt + (-1)^{n+1} \eta_n \pi  + o(\eta_n).
$$
In this formula, the first three summands compose $\Delta_0(\mu_n) = 0,$ and we arrive at 
\begin{equation} \label{KuM_manip1}
\Delta_0(\lambda_n) = \eta_n\big((-1)^{n+1} + o(1)\big).\end{equation}

Proceeding analogously, based on~\eqref{KuM_transformation}, we also obtain the following asymptotics:
\begin{equation} \left.
 \begin{array}{cc} \label{KuM_SC lambda^0}
S_a(t, \lambda_n) = S_a(t, \mu_n) + O(n^{-1}\eta_n), & S_a'(t, \lambda_n) = S_a'(t, \mu_n) + O(\eta_n), \\[4mm]
C_a(t, \lambda_n) = C_a(t, \mu_n) + O(\eta_n), & C_a'(t, \lambda_n) = C_a'(t, \mu_n) + O(n\eta_n)
\end{array} \right\}
\end{equation}
uniformly on $t \in [0, \pi].$
Using~\eqref{KuM_SC lambda^0}, we have
\begin{multline*}
A_n:=S_a(\pi, \lambda_n) \int_0^a W'(0, t,  \lambda_n) q(t)\,dt = \\
= \left(S_a(\pi, \mu_n) + O\left(\frac{\eta_n}{n}\right)\right)\left(-\int_0^a g_n(t) q(t) \, dt + O(\eta_n)\right).
\end{multline*}
Since $g_n(t) = -\cos \theta_n t + O(n^{-1}),$ by the Riemann--Lebesgue lemma, $\int_0^a g_n(t) q(t) \, dt = o(1).$
Using also that $S_a(\pi, \mu_n) = O(n^{-1}),$ we obtain
\begin{equation} \label{KuM_manip2}
A_n = -S_a(\pi, \mu_n) \int_0^a g_n(t) q(t) \, dt + o(\eta_n).
\end{equation}

Analogously, we have
$$
B_n := S_a'(0, \lambda_n) \int_a^\pi W(\pi, t,  \lambda_n)q(t)\,dt = 
S_a'(0, \mu_n) \int_a^\pi W(\pi, t,  \mu_n)q(t)\,dt + o(\eta_n).
$$
Relation~\eqref{KuM_det rel} yields that 
$
S_a'(0, \mu_n) W(\pi, t,  \mu_n) =  -S_a(\pi, \mu_n) g_n(t),
$
and
\begin{equation} \label{KuM_manip3}
B_n = -S_a(\pi, \mu_n) \int_a^\pi g_n(t) q(t) \, dt + o(\eta_n).
\end{equation}
Combining~\eqref{KuM_manip1},~\eqref{KuM_manip2}, and~\eqref{KuM_manip3} with~\eqref{KuM_Delta rep} in $\lambda=\rho_n^2,$ we obtain
$$0 =   ((-1)^n + o(1)) \eta_n+S(\pi, \mu_n) \xi_n, \quad \{ \xi_n \}_{n \ge 1} \in \ell_2.$$
For $n \ge K,$ the value $\eta_n$ is the unique solution of this equation,
which leads to $\eta_n = a_n \nu_n n^{-1}$ with $\{ \nu_n\}_{n \ge K} \in \ell_2,$  and to~\eqref{KuM_eigenvalues asym}.

II. From the definition it follows that  
\begin{equation} \label{KuM_chain a}
a_n = a_{n+1} = \ldots = a_{n+k_n-1} = 0.
\end{equation} Then, by formula~\eqref{KuM_main equation},  $\mu_n$ is a zero of $\Delta(\lambda)$ of multiplicity not less then $k_n.$ This means that $\mu_n$ occurs in the spectrum $\{ \lambda_n \}_{n \ge 1}$ at least $k_n$ times, and~\eqref{KuM_chain} holds up to a numeration. 
\end{Proof}
%%%%%%%%%%%%%%%%%%%%%%%%%%

In what follows, we can assume that the numeration of $\{ \lambda_n\}_{n\ge 1}$ satisfies~\eqref{KuM_chain}.
Denote \begin{equation} \label{KuM_Omega}
\Omega = \{ n + \nu \colon n \in {\mathcal S}, \; \nu = \overline{0, k_n - 1}\}, \quad \overline{\Omega} = {\mathbb N} \setminus \Omega. \end{equation}
Formula~\eqref{KuM_chain} yields that the part of the spectrum $\{ \lambda_n \}_{n \in \Omega}$ does not depend on $q,$ i.e. we have the degeneration condition. Each unique eigenvalue $\lambda_n = \mu_n$ in this part occurs $k_n$ times, which restricts its multiplicity to be not less than $k_n.$ 
Note that~\eqref{KuM_chain} follows from~\eqref{KuM_eigenvalues asym} when  $n \ge K$ since $k_n \le 1.$ In~\cite{KuM_AML}, condition~\eqref{KuM_chain}  was not required because for $p=0,$ we can take $K=1.$

Now, we unify conditions~\eqref{KuM_eigenvalues asym} and~\eqref{KuM_chain} into one  formula. Introduce the values 
$$b_{n+\nu} = \left\{ \begin{array}{cc}
a_{n+\nu}, & \nu = \overline{0, p_n - 1}, \\[2mm]
1, & \nu = \overline{p_n, m_n-1},
\end{array}
\right. \; p_n := \max(1, k_n), \quad n \in {\mathcal S}.$$
Then, formulae~\eqref{KuM_eigenvalues asym} and~\eqref{KuM_chain} are particular cases of the following relation:
\begin{equation} \label{KuM_main asym}
\lambda_n =  \mu_n + b_n \varkappa_n, \quad n \ge 1, \quad \{ \varkappa_n\}_{n \ge 1} \in \ell_2.
\end{equation}
 %This does not depend on N
 It differs from ~\eqref{KuM_eigenvalues asym} and~\eqref{KuM_chain} only by a finite number of formulae for $n=k+\nu < K,$ $k \in \mathcal S,$ $\nu \in \overline{k_n, m_n-1}$ with $b_n \ne 0,$ being non-restrictive. By this reason,~\eqref{KuM_main asym} is equivalent to~\eqref{KuM_eigenvalues asym} along with~\eqref{KuM_chain}. In particular, $n \in \Omega$ if and only if $b_n = 0.$
 
Note that $b_n=O(1),$ then the following asymptotics is weaker than~\eqref{KuM_main asym}: 
\begin{equation} \label{KuM_sing asym}
\lambda_n=\rho_n^2, \quad \rho_n = n - \frac{\alpha+\beta}{2} + \frac{\omega}{\pi n} + \frac{\nu_n}{n}, \quad \{ \nu_n\}_{n\ge 1} \in \ell_2.
\end{equation}
 By the standard approach involving Hadamard's factorization theorem (see, e.g.,~\cite{KuM_yurko}), one can prove that
\begin{equation} \label{KuM_prod}
\Delta(\lambda) = (-1)^\alpha \pi^{\delta_{\alpha,\beta}}\prod_{k=1}^\infty \frac{\lambda_k - \lambda}{\xi_k}, \quad \xi_k := \left\{ \begin{array}{cc}
\left(k - \frac{\alpha+\beta}{2}\right)^2, & k \ge 1 \text{ or } \alpha+\beta <2, \\[2mm]
1, & k = 1, \; \alpha=\beta=1.
\end{array}
\right.
\end{equation}
Thus, by the spectrum we can uniquely reconstruct the characteristic function $\Delta(\lambda).$ 

\section{Main results}

First, we obtain the necessary and sufficient conditions on the spectrum.
\begin{Theorem} \label{KuM_T2}For an arbitrary sequence $\{ \lambda_n \}_{n \ge 1}$ of complex numbers to be the spectrum of boundary value problem~\eqref{KuM_frozen S-L},~\eqref{KuM_bc} with some $q \in L_2(0, \pi),$ it is necessary and sufficient to satisfy~\eqref{KuM_main asym}.
\end{Theorem}

For the proof, we need the following lemma.

\begin{Lemma} \label{KuM_L2}
Let $\Delta(\lambda)$ be constructed via~\eqref{KuM_prod},
where arbitrary values $\{ \lambda_{n}\}_{n\ge1}$ satisfy asymptotics~\eqref{KuM_sing asym}.
 Then, the following representation holds:
 \begin{equation}\label{KuM_rD0}
\Delta(\lambda) = \left\{ \begin{array}{cc}
 \displaystyle\rho^{2\alpha}\left(\frac{\sin \rho \pi}{\rho} - \frac{\cos \rho \pi}{\rho^2} \omega+ \int_0^\pi \frac{\cos \rho t}{\rho^2} \,W(t)\, dt\right), & \alpha=\beta, \\[4mm]
 \displaystyle(-1)^\alpha\left( \cos \rho \pi +\frac{\sin \rho\pi}{\rho}\omega+ \int_0^\pi \frac{\sin \rho t}{\rho}\, W(t)\, dt\right), & \alpha\ne \beta,
\end{array}
\right.
\end{equation}
where $W \in L_2(0, \pi).$ \end{Lemma}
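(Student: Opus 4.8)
We must show that if $\Delta(\lambda)$ is built as the Hadamard product $(-1)^\alpha \pi^{\delta_{\alpha,\beta}}\prod_{k=1}^\infty \frac{\lambda_k-\lambda}{\xi_k}$ from zeros $\{\lambda_n\}$ satisfying the asymptotics $\rho_n = n - \frac{\alpha+\beta}{2} + \frac{\omega}{\pi n} + \frac{\nu_n}{n}$ with $\{\nu_n\}\in\ell_2$, then $\Delta$ admits the integral representation in the lemma with $W\in L_2(0,\pi)$.

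**Strategy: compare with a known product.** The key idea is that the right-hand sides of the claimed formula are, up to the $W$-term, exactly the transformation-operator representations of $\Delta_0$ from the unperturbed problem. So I would introduce the reference function $\Delta_*(\lambda)$ defined by the same product but with the known unperturbed eigenvalues $\mu_k$ in place of $\lambda_k$. By the theory of characteristic functions for Sturm–Liouville operators with potential (which produces $\mu_k$), $\Delta_*$ is precisely $\Delta_0$ and thus already has a representation of the required shape via~\eqref{KuM_transformation} with some $W_*\in L_2(0,\pi)$. The task then reduces to controlling the quotient or difference $\Delta(\lambda)/\Delta_*(\lambda)$, or equivalently $\prod_k \frac{\lambda_k-\lambda}{\mu_k-\lambda}$, using only the $\ell_2$-closeness $\rho_n-\theta_n = O(\nu_n/n)+\ldots$ coming from~\eqref{KuM_sing asym} and~\eqref{KuM_mu}.

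**The main computation.** I would take logarithms and write $\log\frac{\Delta(\lambda)}{\Delta_*(\lambda)} = \sum_k \log\frac{\lambda_k-\lambda}{\mu_k-\lambda}$. Each term is $\log\bigl(1 + \frac{\lambda_k-\mu_k}{\mu_k-\lambda}\bigr)$, and since $\lambda_k-\mu_k = \rho_k^2-\theta_k^2 = (\rho_k-\theta_k)(\rho_k+\theta_k)$ with $\rho_k-\theta_k = O(\nu_k/k)$ (an $\ell_2$ sequence divided by $k$) and $\rho_k+\theta_k = O(k)$, the differences $\lambda_k-\mu_k$ are $O(\nu_k)\in\ell_2$. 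Standard estimates on $\rho$ in the sectors $G_\delta$ of~\eqref{KuM_cos est} then give that $\sum_k \frac{\lambda_k-\mu_k}{\mu_k-\rho^2}$ converges to a function that is $O(\rho^{-1})$ times an entire-function correction, so that $\Delta(\lambda)$ differs from $\Delta_*(\lambda)=\Delta_0(\lambda)$ by a term whose kernel lies in $L_2(0,\pi)$. Concretely, I expect to arrive at $\Delta(\lambda) = \Delta_0(\lambda) + (\text{correction})$, where the correction, after extracting the leading trigonometric factor $\rho^{2\alpha}\frac{\sin\rho\pi}{\rho}$ or $(-1)^\alpha\cos\rho\pi$, contributes an $L_2$ Fourier-coefficient-type perturbation to $W$. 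Absorbing the $\Delta_0$-part (already of the desired form) and the correction into a single $W\in L_2(0,\pi)$ via the Riesz–Fischer theorem / Paley–Wiener characterization of such transforms yields the claim.

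**Anticipated main obstacle.** The delicate point is showing that the perturbation term genuinely produces an $L_2$ kernel rather than merely a bounded or $\ell_\infty$ one; this is exactly where the hypothesis $\{\nu_n\}\in\ell_2$ (as opposed to a weaker $o(1)$) must be used in full strength. I expect the cleanest route is to identify the sequence of ``Fourier coefficients'' of the correction, expressed through $\{\lambda_k-\mu_k\}$ and the Riesz-basis structure from Proposition~\ref{KuM_prop}, and to invoke the Paley–Wiener-type theorem characterizing entire functions of the stated exponential type and trigonometric asymptotics as precisely those with $L_2$ transforms. Handling the two cases $\alpha=\beta$ and $\alpha\neq\beta$ separately (they differ in the power of $\rho$ and in whether $\sin$ or $\cos$ leads) and verifying the $\omega$-coefficient matches $\frac12\int_0^\pi p$ via the first-order asymptotic term $\frac{\omega}{\pi n}$ in~\eqref{KuM_sing asym} will be the routine but careful bookkeeping that completes the proof.
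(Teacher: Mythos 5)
Your plan follows the right general lines, and it is close in spirit to how representation lemmas of this type are actually established in the literature: compare the given product with a reference product whose representation is already known, and convert $\ell_2$-closeness of zeros into membership of the discrepancy in a Paley--Wiener class. Note, though, that the paper does not prove Lemma~\ref{KuM_L2} this way (or from scratch at all): it reduces the case $\alpha=\beta=0$ to Lemma~3.3 of~\cite{KuM_results2007} via integration by parts and declares the remaining boundary conditions analogous. So the analytic core you are trying to supply is exactly the content of that cited lemma.

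The trouble is that your write-up does not supply it. The decisive step --- ``so that $\Delta(\lambda)$ differs from $\Delta_*(\lambda)=\Delta_0(\lambda)$ by a term whose kernel lies in $L_2(0,\pi)$'' --- does not follow from the estimates you derive. From $\{\lambda_k-\mu_k\}_{k\ge1}\in\ell_2$ and a Cauchy--Schwarz bound one gets the pointwise estimate $\sum_k|\lambda_k-\mu_k|/|\mu_k-\rho^2|=O(|\rho|^{-1})$ for $\rho\in G_\delta$, hence $|\Delta-\Delta_0|=O(e^{\tau\pi}/|\rho|)$ there; but the existence of $W\in L_2(0,\pi)$ in~\eqref{KuM_rD0} requires (say, for $\alpha\ne\beta$) that $\rho\bigl((-1)^\alpha\Delta(\rho^2)-\cos\rho\pi\bigr)-\omega\sin\rho\pi$ be \emph{square integrable} on the real axis, and your bound only makes this quantity $O(1)$ there. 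Upgrading boundedness to $L_2$ is precisely where $\{\nu_n\}_{n\ge1}\in\ell_2$ must be used in an averaged, not pointwise, way --- for instance by expanding $\Delta-\Delta_0$ as $\sum_k(\lambda_k-\mu_k)\,\Delta_0(\lambda)/(\mu_k-\lambda)$ plus controllable higher-order terms and proving a Bessel-type (discrete Hilbert transform, Plancherel--Polya) bound for the system of entire functions $\rho\,\Delta_0(\rho^2)/(\mu_k-\rho^2)$ --- and this is exactly the step you defer with ``I expect.'' In addition: your ratio/logarithm estimates are valid only in $G_\delta$, so the disks around the $\mu_k$ need a separate maximum-modulus argument; the $\mu_k$ may be multiple here (the non-selfadjoint situation the paper is devoted to), so the simple-pole expansion must be adapted; and finitely many $\lambda_k$ are completely unconstrained by~\eqref{KuM_sing asym}, so one also needs a step showing that replacing finitely many zeros preserves the representation. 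As it stands, the proposal is a sensible strategy with its central lemma missing, not a proof.
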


For $\alpha=\beta=0,$ the statement of the lemma easily follows from Lemma~3.3 in~\cite{KuM_results2007} after integration in parts. For the other combinations of $\alpha$ and $\beta,$ the needed statements are proved by analogous computations. 

\begin{proof}[Proof of Theorem~\ref{KuM_T2}] The necessity part was proved in the previous section. Let us prove the sufficiency part. 
Construct the function $\Delta(\lambda)$ via formula~\eqref{KuM_prod} using the given numbers  $\{ \lambda_n \}_{n \ge 1}.$ Condition~\eqref{KuM_main asym} yields asymptotics~\eqref{KuM_sing asym},  and, by Lemma~\ref{KuM_L2}, $\Delta(\lambda)$ has the form~\eqref{KuM_rD0}.

Now, we should find a function $q$ such that its coefficients $\xi_k = \int_0^\pi q(t) g_k(t)\, dt$ satisfy~\eqref{KuM_main equation}. For every $n \in \mathcal S,$ relation~\eqref{KuM_main equation} can be considered as a system of $m_n$ linear equations with respect to the vector $[ \xi_{n +\nu} ]_{\nu=0}^{m_n-1}:$
\begin{equation} \label{KuM_system}
\left\{\begin{array}{c}
a_n \xi_{n + m_n - 1} = n^{2 - \alpha-\beta}\Delta(\mu_n), \\[3mm]
\displaystyle a_n \xi_{n+m_n-2} + a_{n+1} \xi_{n + m_n - 1} = n^{2 - \alpha-\beta}\frac{\Delta'(\mu_n)}{1!}, \\[3mm]
\ldots \\
\displaystyle a_n \xi_n + a_{n+1} \xi_{n+1} + \ldots + a_{n+m_n-1} \xi_{n+m_n-1} = 
n^{2 - \alpha-\beta}\frac{\Delta^{(m_n-1)}(\mu_n)}{(m_n-1)!}.
\end{array}\right.
\end{equation}
By~\eqref{KuM_main asym}, we have $\Delta(\mu_n) = \Delta'(\mu_n) = \ldots =  \Delta^{(k_n - 1)}(\mu_n) = 0.$ This along with~\eqref{KuM_chain} yields that first $k_n$ rows in system~\eqref{KuM_system} turn trivial identities and that arbitrary values of $\xi_{n +\nu},$ $\nu=\overline{0, k_n-1},$ satisfy this system. If $k_n < m_n,$ then $a_{n + k_n} \ne 0,$ and
 the rest $\xi_{n +\nu}$ are uniquely determined by subsequent application of the following formulae:
\begin{equation} \label{KuM_xi} 
\left.\begin{array}{c}
\displaystyle \xi_{n+m_n-1} = n^{2 - \alpha-\beta}\frac{\Delta^{(k_n)}(\mu_n)}{a_{n + k_n} k_n!}, \\[3mm] 
\displaystyle\xi_{n+m_n-\nu} = \frac{1}{a_{n + k_n}} \left( n^{2 - \alpha-\beta}\frac{\Delta^{(k_n+\nu-1)}(\mu_n)}{ (k_n+\nu-1)!} - \sum_{\eta=1}^{\nu-1} a_{n+k_n+\eta} \xi_{n+m_n-\nu +\eta}\right), \;
\nu = \overline{2, m_n-k_n}.
\end{array}\right\}
\end{equation}
Remind that for a sufficiently large $n \ge K,$ we have $m_n=1,$ and $\xi_n$ either can be arbitrary (if $k_n = 1$) or it is computed via the first formula in~\eqref{KuM_xi} (if $k_n = 0$).

 Thus, we arrive at that the part of the coefficients $\{ \xi_k\}_{k \in \overline\Omega}$ is uniquely determined by $\{ \lambda_n\}_{n\ge 1},$ while  $\{ \xi_k\}_{k \in\Omega}$ can be arbitrary (for the definition of $\Omega$ and $\overline \Omega,$ see~\eqref{KuM_Omega}).
Applying the scheme from the proof of Theorem~1 in~\cite{KuM_AML}, using representation~\eqref{KuM_rD0}, we obtain that $\{n^{2-\alpha-\beta}\Delta(\mu_n)a^{-1}_n\}_{n\in \overline\Omega} \in \ell_2,$ and $\{ \xi_k\}_{k \in \overline\Omega} \in \ell_2.$
 Choose arbitrary coefficients $\{ \xi_{k}\}_{k \in \Omega} \in l_2.$ Then, there exists $q \in L_2(0, \pi)$ such that its coefficients with respect to the basis $\{ g_k(t)\}_{k\ge1}$ are $\{ \xi_{k}\}_{k \ge 1}.$
 
 Consider boundary value problem~\eqref{KuM_frozen S-L},~\eqref{KuM_bc} with such potential $q.$ Let $\Delta_*(\lambda)$ be the characteristic function of this boundary value problem. Then, by construction, 
 $$ F(\lambda) = \frac{\Delta_*(\lambda) - \Delta(\lambda)}{\Delta_0(\lambda)}$$
 is an entire function. %Consider $F(\rho^2)$ being an entire function of $\rho.$
 Representations~\eqref{KuM_Delta rep} and~\eqref{KuM_rD0} along with~\eqref{KuM_transformation} yield asymptotics 
 \begin{equation} \label{KuM_a1}\Delta_*(\lambda) - \Delta(\lambda) = O\left(\rho^{\alpha+\beta-2} e^{\tau \pi}\right), \quad  \rho^2 = \lambda, \; \tau = |{\mathrm Im} \rho|.\end{equation}
 Using~\eqref{KuM_transformation}, we also arrive at
$$\Delta_0(\lambda) = \left\{\begin{array}{cc}
\displaystyle(-1)^\alpha\left(\cos \rho \pi + O\Big(\frac{e^{\tau \pi}}{\rho}\Big)\right), & \alpha\ne\beta, \\[3mm]
\displaystyle\rho^{2\alpha-1}\left(\sin \rho \pi + O\Big(\frac{e^{\tau \pi}}{\rho}\Big)\right), & \alpha=\beta.
\end{array} \right.
$$
Consider arbitrary $\delta \in (0, \frac14).$ For a sufficiently large $|\lambda| \ge N_\delta,$ analogously to~\eqref{KuM_cos est}, one can prove that 
\begin{equation}\label{KuM_a2}
|\Delta_0(\lambda)| \ge C_\delta |\rho|^{\alpha+\beta-1} e^{\tau \pi}, \quad \rho \in G_\delta = \left\{ z \in \mathbb C \colon \Big|z - n + \frac{\alpha+\beta}{2} \Big| \ge \delta, \; n \in \mathbb Z\right\},
\end{equation}
where $ C_\delta >0.$
 Using~\eqref{KuM_a1} and~\eqref{KuM_a2}, we arrive at $F(\lambda)=o(1)$ in $G_\delta.$ By the maximum modulus principle and Liouville's theorem, $F(\lambda) = 0.$  
Thus, the function $\Delta(\lambda)$ is the characteristic function of  the boundary value problem~\eqref{KuM_frozen S-L},~\eqref{KuM_bc} with the considered potential $q,$ and $\{ \lambda_n\}_{n \ge 1}$ is its spectrum.
\end{proof}

From the proof of Theorem~\ref{KuM_T2}, it follows that the potentials $q$ corresponding to one and the same spectrum have the same coefficients $\{ \xi_n \}_{n \in \overline \Omega},$ while for $n \in \Omega,$ the coefficients $\xi_n$ may differ. 
At the same time, by Proposition~\ref{KuM_prop}, the mapping $q \mapsto \{ \xi_n \}_{n \ge 1}$ 
 is a one-to-one correspondence between $L_2(0, \pi)$ and $\ell_2.$ Thus, for a fixed spectrum $\{ \lambda_n \}_{n \ge 1},$ one can construct the set of all iso-spectral potentials $q$ varying $\{ \xi_n \}_{n \in \Omega} \in \ell_2$ or find a unique $q$ setting additionally $\{ \xi_n \}_{n \in \Omega} \in \ell_2.$ In the latter case, we obtain a uniqueness theorem.

\begin{Theorem}
Let $\{\tilde \lambda_n\}_{n \ge 1}$ be the spectrum of boundary value problem~\eqref{KuM_frozen S-L},~\eqref{KuM_bc} with a potential $\tilde q \in L_2(0, \pi),$ while $\tilde \xi_n = \int_0^\pi \tilde q(t) g_n(t) \, dt,$ $n \in \Omega.$ If $\{\lambda_n\}_{n \ge 1}=\{\tilde \lambda_n\}_{n \ge 1}$ and $\xi_n = \tilde \xi_n$ for $n \in \Omega,$ then $q = \tilde q.$
\end{Theorem}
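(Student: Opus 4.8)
The plan is to reduce the uniqueness of $q$ to the coincidence of \emph{all} coefficients $\{\xi_n\}_{n\ge1}$ with respect to the Riesz basis $\{g_n\}$, exploiting that the characteristic function is uniquely determined by the spectrum. First I would observe that, since $p$ is fixed, every object built from the unperturbed problem---the eigenvalues $\mu_n$, their multiplicities $m_n$, the numbers $a_n$, the functions $g_n$, and hence the index sets $\Omega$ and $\overline\Omega$ from~\eqref{KuM_Omega}---is common to both $q$ and $\tilde q$. The hypothesis $\{\lambda_n\}_{n\ge1}=\{\tilde\lambda_n\}_{n\ge1}$ together with the product representation~\eqref{KuM_prod} gives $\Delta(\lambda)=\tilde\Delta(\lambda)$ identically; in particular all derivatives $\Delta^{(\nu)}(\mu_n)$ and $\tilde\Delta^{(\nu)}(\mu_n)$ agree.

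Next I would invoke the analysis of system~\eqref{KuM_system} carried out in the proof of Theorem~\ref{KuM_T2}. For each $n\in\mathcal S$ with $k_n<m_n$, the coefficients $\xi_{n+\nu}$ with $\nu=\overline{k_n,m_n-1}$---that is, precisely the indices lying in $\overline\Omega$---are produced by the explicit recursion~\eqref{KuM_xi}, whose right-hand sides depend only on the known quantities $a_{n+k_n+\eta}$ and on the derivatives of $\Delta$ at $\mu_n$. Since these derivatives coincide for the two problems, we obtain $\xi_k=\tilde\xi_k$ for every $k\in\overline\Omega$. The remaining coefficients, indexed by $k\in\Omega$, coincide by the standing hypothesis $\xi_n=\tilde\xi_n$, $n\in\Omega$.

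Thus $\xi_k=\tilde\xi_k$ for all $k\ge1$. Finally, by Proposition~\ref{KuM_prop} the system $\{g_k\}_{k\ge1}$ is a Riesz basis of $L_2(0,\pi)$, so the mapping $q\mapsto\{\xi_k\}_{k\ge1}$ is a one-to-one correspondence between $L_2(0,\pi)$ and $\ell_2$; equality of all coefficients therefore forces $q=\tilde q$, as required.

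The only point requiring care---the single ``obstacle''---is to confirm that the recursion~\eqref{KuM_xi} expresses each $\xi_k$, $k\in\overline\Omega$, purely in terms of the spectrum and the fixed unperturbed data, with no hidden dependence on $q$ itself. This is exactly what the derivation of~\eqref{KuM_xi} inside the proof of Theorem~\ref{KuM_T2} establishes, so the theorem follows with essentially no further computation.
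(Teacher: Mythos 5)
Your proposal is correct and follows essentially the same route as the paper: the paper likewise deduces the theorem from the proof of Theorem~\ref{KuM_T2}, noting that the spectrum determines $\Delta(\lambda)$ via~\eqref{KuM_prod}, that the triangular system~\eqref{KuM_system} with $a_{n+k_n}\ne 0$ then fixes $\{\xi_k\}_{k\in\overline\Omega}$ through~\eqref{KuM_xi}, and that Proposition~\ref{KuM_prop} makes $q\mapsto\{\xi_k\}_{k\ge1}$ a bijection onto $\ell_2$. Your only addition is to spell out explicitly that the unperturbed data ($\mu_n$, $m_n$, $a_n$, $g_n$, $\Omega$) are shared by both problems, which the paper leaves implicit.
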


Since the proof of Theorem~\ref{KuM_T2} is constructive, we have the following algorithm
for recovering $q$ given $\{ \lambda_n\}_{n \ge 1}$ and $\{\xi_n\}_{n \in \Omega}.$ 

\bigskip

{\bf Algorithm~1.} To recover the potential $q,$ one should:

1. Construct $\Delta(\lambda)$ via formula~\eqref{KuM_prod}.

2. For $n \in \mathcal S,$ by formula~\eqref{KuM_xi}, compute the unknown coefficients $\xi_{n+k_n}, \ldots, \xi_{n+m_n -1}.$

3. Find $q = \sum_{n=1}^\infty \xi_n f_n,$ where $\{ f_n \}_{n \ge 1}$ is the basis  biorthonormal to $\{ \overline g_n\}_{n\ge 1}$ in $L_2(0, \pi).$
\bigskip

\noindent{\it Acknowledgements.} This research was supported by grant No.~22-21-00509 of the Russian Science Foundation,
https://rscf.ru/project/22-21-00509/.


\begin{thebibliography}{99}
\bibitem{KuM_B}
Borg~G. Eine Umkehrung der Sturm--Liouvilleschen Eigenwertaufgabe.  {\it Acta Mathematica}, vol.~78, iss.~1, pp.~1--96.
%\url{https://doi.org/10.1007/BF02421600}

\bibitem{KuM_march}
Marchenko~V.~A. {\it Sturm--Liouville Operators and Their Applications.} Basel, Birkh\"{a}user, 1986. 367~p. (Russ. ed.~: Kiev, Naukova Dumka, 1977. 329~p.).
%\url{https://doi.org/10.1007/978-3-0348-5485-6}

\bibitem{KuM_levitan}
Levitan~B.~M. {\it Inverse Sturm--Liouville Problems.} Berlin, Boston, De Gruyter, 1987. 240~p. (Russ. ed.~: Moscow, Nauka, 1984. 240~p.).
%\url{https://doi.org/10.1515/9783110941937}

\bibitem{KuM_yurko}
Freiling~G., Yurko~V.~A. {\it Inverse Sturm--Liouville Problems and Their Applications.} New York, NOVA Science Publishers, 2001.  305~p.
 %	EDN: ZVITUV

\bibitem{KuM_high-order}
Yurko~V.~A. {\it Method of Spectral Mappings in the Inverse Problem Theory.} Inverse and Ill-posed Problems Series. Utrecht,  VSP, 2002. 306~p.
 % EDN: ZVIUEB

%======================================
\bibitem{KuM_results2007} Buterin~S.~A. On an inverse spectral problem for a convolution integro-differential operator.
{\it Results in Mathematics}, 2007, vol.~50, no.~3--4, pp. 173--181.
%\url{https://doi.org/10.1007/s00025-007-0244-6}

\bibitem{KuM_yur-con}
Yurko~V. Inverse spectral problems for first order integro-differential operators. {\it Boundary Value Problems}, 2017, vol.~2017, article~98. 
%\url{https://doi.org/10.1186/s13661-017-0831-8}

\bibitem{KuM_nonlocal mc} 
Yang~C.-F., Yurko~V. On the determination of differential pencils with nonlocal conditions. {\it Journal of Inverse and Ill-Posed Problems}, 2018, vol.~26, no.~5,  pp.~577--588. 
%\url{https://doi.org/10.1515/jiip-2017-0076}

\bibitem{KuM_pikula2}
Pikula~M., Vladi\v{c}i\'{c}~V., Vojvodi\'{c}~B. Inverse spectral problems for Sturm--Liouville operators with a constant delay less than half the length of the interval and Robin boundary conditions. {\it Results in Mathematics}, 2019, vol.~74, article~45. %\url{https://doi.org/10.1007/s00025-019-0972-4}

\bibitem{KuM_djuric}
Djuri\'{c}~N., Buterin~S. On an open question in recovering Sturm--Liouville-type operators with delay. {\it Applied Mathematics Letters}, 2021, vol.~113, pp.~106862.
%\url{https://doi.org/10.1016/j.aml.2020.106862}

\bibitem{KuM_uniform}
Buterin~S.~A. Uniform full stability of recovering convolutional perturbation of the Sturm--Liouville operator from the spectrum. {\it Journal of
Differential Equations}, 2021, vol.~282, pp.~67--103.
%\url{https://doi.org/10.1016/j.jde.2021.02.022}

\bibitem{KuM_bolletin} Bondarenko~N.~P. Inverse problem for a differential operator on a star-shaped graph with nonlocal matching condition. {\it Bolet\'{i}n de la Sociedad Matem\'{a}tica Mexicana}, 2023, vol.~29, article~2. 
%\url{https://doi.org/10.1007/s40590-022-00476-x}

%%%%%%%%%%%%%%%%%%%%%%%%%%%%%%%%%%%%

\bibitem{KuM_albeverio}
Albeverio~S., Hryniv~R.~O., Nizhnik~L.~P. Inverse spectral problems for non-local Sturm--Liouville operators. {\it Inverse Problems}, 2007, vol.~23, no.~2, pp.~523--535.
%\url{https://doi.org/10.1088/0266-5611/23/2/005}

\bibitem{KuM_nizh10}
Nizhnik~L.~P. Inverse nonlocal Sturm-Liouville problem. {\it Inverse Problems}, 2010, vol.~26, no.~12, pp.~125006.
%\url{https://doi.org/10.1088/0266-5611/26/12/125006}

\bibitem{KuM_BBV}
Bondarenko~N.~P., Buterin~S.~A., Vasiliev~S.~V. An inverse spectral problem for Sturm--Liouville operators with frozen argument. {\it Journal of Mathematical Analysis and Applications}, 2019, vol.~472, iss.~1, pp.~1028--1041.
%\url{https://doi.org/10.1016/j.jmaa.2018.11.062}

\bibitem{KuM_BV}
Buterin~S.~A., Vasiliev~S.~V. On recovering a Sturm-Liouville-type operator with the frozen argument rationally proportioned to the interval
length. {\it Journal of Inverse and Ill-posed Problems}, 2019, vol.~27, no.~3, pp.~429--438.
%\url{https://doi.org/10.1515/jiip-2018-0047}

\bibitem{KuM_BK}
Buterin~S., Kuznetsova~M. On the inverse problem for Sturm--Liouville-type operators with frozen argument: rational case. {\it Computational and Applied Mathematics}, 2020, vol.~39, article~5. 
%\url{https://doi.org/10.1007/s40314-019-0972-8}

\bibitem{KuM_wang}
Wang~Y.-P, Zhang~M., Zhao~W., Wei~X. Reconstruction for Sturm--Liouville operators with frozen argument for irrational cases. {\it Applied
Mathematics Letters}, 2021, vol.~111, pp.~106590.
%\url{https://doi.org/10.1016/j.aml.2020.106590}

\bibitem{KuM_but-hu}
Buterin~S., Hu~Y.-T. Inverse spectral problems for Hill-type operators with frozen argument. {\it Analysis and Mathematical Physics}, 2021, vol.~11, article~75. 
%\url{https://doi.org/10.1007/s13324-021-00500-9}

\bibitem{KuM_tsai}
Tsai~T.-M., Liu~H.-F., Buterin~S., Chen~L.-H, Shieh~C.-T. Sturm--Liouville-type operators with frozen argument and Chebyshev polynomials. {\it Mathematical Methods in the Applied Sciences}, 2022, vol.~45, iss.~16, pp.~9635--9652. %\url{https://doi.org/10.1002/mma.8327} 

\bibitem{KuM_my-froz}
Kuznetsova~M. Inverse problem for Sturm--Liouville operators with frozen argument on closed sets. {\it Itogi Nauki i Tekhniki. Seriya `'Sovremennaya Matematika i ee Prilozheniya. Tematicheskie Obzory''}, 2022, vol.~208, pp.~49--62 (in Russian).
%\url{https://doi.org/10.36535/0233-6723-2022-208-49-62}
%EDN: JVRUQD

\bibitem{KuM_AML} Kuznetsova~M. Necessary and sufficient conditions for the spectra of the Sturm--Liouville operators with frozen argument. {\it Applied Mathematics Letters}, 2022, vol.~131, pp.~108035. 
%\url{https://doi.org/10.1016/j.aml.2022.108035}

\bibitem{KuM_DobHry} Dobosevych~O., Hryniv~R.  Reconstruction of differential operators with frozen argument. {\it Axioms}, 2022, vol~11, no.~1, article~24.
%\url{https://doi.org/10.3390/axioms11010024}

\bibitem{KuM_bon-disc}
Bondarenko N.P. Finite-difference approximation of the inverse Sturm-Liouville problem with frozen argument. {\it Applied Mathematics and Computation}, 2022, vol.~413, pp.~126653.
%\url{https://doi.org/10.1016/j.amc.2021.126653}

\bibitem{KuM_my-uniform} 
Kuznetsova M.A. Uniform stability of recovering Sturm--Liouville-type operators with frozen argument. {\it Results in Mathematics}, 2023, vol.~78, article~169.
%https://doi.org/10.1007/s00025-023-01945-z
%--------------------------------------
\bibitem{KuM_kraal}
Kraal ~A.~M. The development of general differential and general differential-boundary systems. {\it The Rocky Mountain Journal of Mathematics}, 1975, vol.~5, no.~4, pp.~493--542.
%\url{https://doi.org/10.1216/RMJ-1975-5-4-493}

\bibitem{KuM_lomov}
Lomov~I.~S. Loaded differential operators: Convergence of spectral expansions. {\it Differential Equations}, 2014, vol.~50, no.~8, pp.~1070--1079.
%\url{https://doi.org/10.1134/S0012266114080060}

\bibitem{KuM_lomov2}
Lomov~I.~S. {\it Spectral Method of V.A. Il'in. Non-Selfadjoint Operators. I. Operator of the Second Order. Basisness and Uniform Convergence of
Spectral Expansions.} Moscow, MAKS Press, 2019, 132~p. (in Russian)
%EDN: AVUERZ

%-----------------------------

\bibitem{KuM_feller} Feller~W. The parabolic differential equations and the associated semi-groups of transformations. {\it Annals of Mathematics}, 1952, vol.~55,  no.~3, pp.~468--519.
%\url{https://doi.org/10.2307/1969644 }

\bibitem{KuM_feller2} Feller~W. Diffusion processes in one dimension. {\it Transactions of the American Mathematical Society}, 1954, vol.~77, pp.~1--31.
%\url{https://doi.org/10.1090/S0002-9947-1954-0063607-6}

\bibitem{KuM_gord}  Gordeziani~N. On some non-local problems of the theory of elasticity. {\it Bulletin of TICMI}, 2000, vol.~4, pp.~43--46.
%Available at: \url{https://emis.univie.ac.at//journals/TICMI/vol4/natogtic.ps}
%(accessed April 28, 2023).

\bibitem{KuM_katarzyna} Szyma\'{n}ska-D\k{e}bowska~K. On the existence of solutions for nonlocal boundary value problems. {\it Georgian Mathematical Journal}, 2015, vol.~22, no.~2,  pp.~273--279. 
%\url{https://doi.org/10.1515/gmj-2015-0005}

%%%%%%%%%%%%%%%%%%%%%%%%%

\bibitem{KuM_pol}
Polyakov~D.~M. Nonlocal perturbation of a periodic problem for a second-order differential operator. {\it Differential Equations}, 2021, vol.~57, iss.~1, pp.~11--18.
%\url{https://doi.org/10.1134/S001226612101002X}

\bibitem{KuM_shka} Shkalikov~A.~A. The completeness of eigenfunctions and associated functions of an ordinary differential operator with irregular-separated boundary conditions. {\it Functional Analysis and Its Applications}, 1976, vol.~10, iss.~4, pp.~305--316.
%\url{https://doi.org/10.1007/BF01076030}

\bibitem{KuM_nai} Naimark~M.~A. {\it Linear Differential Operators.} Moscow, Nauka,  1968, 351~p. (in Russian)
%EDN: RYRSSP
\end{thebibliography}
\end{document}